\theoremstyle{plain}
\numberwithin{equation}{section}
\newtheorem{thm}{Theorem}[section]
\newtheorem{theorem}[thm]{Theorem}
\newtheorem{lemma}[thm]{Lemma}
\newtheorem{remark}[thm]{Remark}
\newtheorem{corollary}[thm]{Corollary}
\begin{document}
\setcounter{page}{1}

\title[General infinite series evaluations involving Fibonacci numbers]{General infinite series evaluations involving Fibonacci numbers and the Riemann Zeta function}
\author{Robert Frontczak}
\address{Landesbank Baden-W\"urttemberg, 70173 Stuttgart, Germany}
\email{robert.frontczak@lbbw.de}

%
\author{Taras Goy}
\address{Vasyl Stefanyk Precarpathian National University, 76018 Ivano-Frankivsk, Ukraine}
\email{taras.goy@pnu.edu.ua}

\begin{abstract}
The purpose of this article is to present closed forms for various types of infinite series
involving Fibonacci (Lucas) numbers and the Riemann zeta function at integer arguments.
\end{abstract}

\maketitle

\section{Motivation and introduction}

This paper is devoted to combine two very popular and important mathematical objects:
the Riemann zeta function and Fibonacci numbers.
Both objects have been studied intensively and are well understood but identities connecting
them are not documented in the mathematical literature. In this article, we attempt to fill this gap. Using generating functions, we exhibit many interesting infinite series identities derived by fairly routine arguments.

Recall that the Riemann zeta function $\zeta(s), s\in\mathbb{C},$ is defined by \cite{Abramowitz}
\begin{equation*}
\zeta(s) = \sum_{k=1}^\infty \frac{1}{k^s}, \quad \Re(s)>1.
\end{equation*}
The analytical continuation to all $s\in\mathbb{C}$ with $\Re(s)>0,s\neq 1,$ is given by
\begin{equation*}
\zeta(s) = (1-2^{1-s})^{-1} \sum_{k=1}^\infty \frac{(-1)^{k+1}}{k^s}.
\end{equation*}
The evaluation of $\zeta(s)$ at integer arguments is an old problem that still challenges the mathematical community. For even positive integer arguments this problem was completely solved by Euler showing that
\begin{equation*}
\zeta(2n) = (-1)^{n+1}\frac{(2\pi)^{2n}}{2(2n)!} B_{2n},
\end{equation*}
where $B_n$ are the Bernoulli numbers \cite{Arakawa}. For odd integer arguments, the problem is still open. Much more information about $\zeta(s)$ is contained in the textbooks \cite{Edwards,Srivastava}, among others. 

On the other hand, Fibonacci numbers are one of the most famous integer sequences in the mathematical world.
The Fibonacci numbers $F_n$ and the companion sequence of Lucas numbers $L_n$ are defined for $n\geq 0$
as $F_{n+2} = F_{n+1} + F_n$ and $L_{n+2} = L_{n+1} + L_n$ with initial conditions $F_0 = 0, F_1 = 1$, $L_0=2$ and $L_1=1$,
respectively. The Binet formulas are given by
\begin{equation*}
F_n = \frac{\alpha^n-\beta^n}{\alpha-\beta}, \qquad L_n = \alpha^n + \beta^n,
\end{equation*}
where $\alpha$ is the golden ratio, i.e., $\alpha = \frac{1+\sqrt{5}}{2}$ and $\beta=-1/\alpha=\frac{1-\sqrt{5}}{2}$.
The sequences $(F_n)_{n\geq 0}$ and $(L_n)_{n\geq 0}$ possess many interesting properties and appear in mathematical branches such as combinatorics and graph theory. See \cite{Koshy} for more details. They are indexed in the On-Line Encyclopedia of Integer Sequences \cite{OEIS} with entries A000045 and A000032, respectively.

Infinite series evaluations involving Fibonacci (Lucas) numbers and the zeta function are rare. Very recently, the following evaluations involving the Riemann zeta function at positive even integer argument and scaled even Fibonacci (Lucas) numbers were stated in \cite{Frontczak-Elem}:
\begin{equation*}
\sum_{k=1}^\infty \zeta(2k) \frac{F_{2k}}{5^k} = \frac{\pi}{2\sqrt{5}}\tan\Big ( \frac{\pi}{2\sqrt{5}}\Big ),
\end{equation*}
\begin{equation*}
\sum_{k=1}^\infty \zeta(2k) \frac{L_{2k}}{5^k} = \frac{\pi}{2\sqrt{5}}\tan\Big ( \frac{\pi}{2\sqrt{5}}\Big ) + 1.
\end{equation*}

A still more appealing identity involving $\zeta(s)$ at odd integer argument and Fibonacci numbers comes as another problem proposal from \cite{Frontczak-Adv}:
\begin{equation}\label{id1}
\sum_{k=1}^\infty \zeta(2k+1) \frac{F_{2k}}{5^k} = \frac{1}{2}.
\end{equation}
Interestingly, the Lucas counterpart does not possess such a nice structure:
\begin{equation*}
\sum_{k=1}^\infty \zeta(2k+1) \frac{L_{2k}}{5^k} = \frac{3}{2} - 2\sum_{n=1}^\infty \frac{1}{n(5n^2-5n+1)(5n^2+5n+1)}.
\end{equation*}
A few more such relations can be found in \cite{Frontczak-NNTDM}.

The goal of this article is to continue the research in this direction and to present
more closed forms for some types of infinite series involving Fibonacci (Lucas) numbers and the Riemann zeta function. To prove the results, we will mainly work with generating functions and some series evaluations. In addition, in some of our proofs we will apply properties of the digamma function $\psi(z), z\in\mathbb{C}$. Recall that  $\psi(z)$ is the first logarithmic derivative of the gamma function, i.e.,
\begin{equation*}
\psi(z) = (\ln \Gamma(z))' = \frac{\Gamma'(z)}{\Gamma(z)},
\end{equation*}
where $\Gamma(z)$ is the gamma function \cite{Abramowitz}. 

The digamma function possesses the following properties:
\begin{equation}\label{psi_id1}
\psi(z+1) = \psi(z) + \frac{1}{z},
\end{equation}
\begin{equation}\label{psi_id2}
\psi(z+1) = -\gamma + \sum_{n=1}^\infty \Big ( \frac{1}{n} - \frac{1}{n+z}\Big ), \qquad z\neq -1, -2, \ldots,
\end{equation}
and the reflection property
\begin{equation}\label{psi_id3}
\psi(1-z) - \psi(z) = \pi \cot\pi z,
\end{equation}
where $\gamma$ is famous the Euler-Mascheroni constant
\begin{equation*}
\gamma = \lim_{n\rightarrow \infty}\Big(\sum_{k=1}^n\frac{1}{k} - \ln n\Big) = 0,5772156649\ldots
\end{equation*}

These properties will be employed in some of the proofs below.

\section{Main results}

The following lemma will be used repeatedly in this section.
\begin{lemma}\cite{Frontczak-NNTDM} \label{mainlem1}
	The following identity holds true:
	\begin{equation}
	\sum_{n=1}^\infty \frac{1}{n^2+n-1} = 1+\frac{\sqrt{5}}{5}\pi\tan\!\frac{\sqrt{5} \pi}{2}.
	\end{equation}
\end{lemma}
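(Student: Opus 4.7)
The plan is to evaluate the series by partial fractions and then identify the resulting two tail sums with a difference of digamma values. First I would factor the denominator. Since $\alpha+\beta=1$ and $\alpha\beta=-1$, we have $n^{2}+n-1=(n+\alpha)(n+\beta)$, and partial fractions give
\begin{equation*}
\frac{1}{n^{2}+n-1}=\frac{1}{\alpha-\beta}\left(\frac{1}{n+\beta}-\frac{1}{n+\alpha}\right)=\frac{1}{\sqrt{5}}\left(\frac{1}{n+\beta}-\frac{1}{n+\alpha}\right).
\end{equation*}

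Next I would add and subtract $1/n$ inside the bracket, so that each of the resulting pieces becomes a convergent series of the type appearing in \eqref{psi_id2}. Using \eqref{psi_id2} with $z=\alpha$ and $z=\beta$, the Euler--Mascheroni constants cancel and one obtains
\begin{equation*}
\sum_{n=1}^{\infty}\frac{1}{n^{2}+n-1}=\frac{1}{\sqrt{5}}\bigl(\psi(\alpha+1)-\psi(\beta+1)\bigr).
\end{equation*}
Applying the recurrence \eqref{psi_id1} to both terms on the right and using $1/\alpha-1/\beta=-\beta+\alpha=\sqrt{5}$ (since $\alpha\beta=-1$), this reduces to
\begin{equation*}
\sum_{n=1}^{\infty}\frac{1}{n^{2}+n-1}=1+\frac{1}{\sqrt{5}}\bigl(\psi(\alpha)-\psi(\beta)\bigr).
\end{equation*}

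Finally I would invoke the reflection formula \eqref{psi_id3} with $z=\beta$, noting that $1-\beta=\alpha$. This yields $\psi(\alpha)-\psi(\beta)=\pi\cot(\pi\beta)=\pi\cot\!\bigl(\frac{\pi}{2}-\frac{\sqrt{5}\,\pi}{2}\bigr)=\pi\tan\!\frac{\sqrt{5}\,\pi}{2}$, and substituting back produces the claimed closed form $1+\frac{\sqrt{5}}{5}\pi\tan\!\frac{\sqrt{5}\,\pi}{2}$.

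The step most prone to slips will be the last one, since it requires tracking the sign carefully: one must verify that $\beta=(1-\sqrt{5})/2$ yields $1-\beta=\alpha$ (and not the other way around), and then convert $\cot$ to $\tan$ correctly via the complementary angle identity. The other potential nuisance is justifying the interchange that splits $\sum(\frac{1}{n+\beta}-\frac{1}{n+\alpha})$ into two individually conditionally convergent pieces; this is legitimate once $1/n$ is added and subtracted, since each resulting telescoped-type series then converges absolutely at rate $O(1/n^{2})$.
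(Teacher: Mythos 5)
Your argument is correct, and it is worth noting that the paper itself gives no proof of this lemma at all: it is simply quoted from the reference \cite{Frontczak-NNTDM}. Your derivation therefore supplies a self-contained proof, and it does so using exactly the digamma machinery \eqref{psi_id1}--\eqref{psi_id3} that the paper sets up in the introduction and deploys in the proofs of Theorems \ref{thm2} and \ref{thm3}, so it is entirely in the spirit of the paper. Checking the delicate points: the factorization $n^2+n-1=(n+\alpha)(n+\beta)$ is right since $\alpha+\beta=1$, $\alpha\beta=-1$; the split after adding and subtracting $1/n$ is legitimate because $\frac{1}{n}-\frac{1}{n+z}=\frac{z}{n(n+z)}=O(1/n^2)$ and $n+\beta>0$ for $n\ge 1$; the recurrence step uses $1/\alpha-1/\beta=\alpha-\beta=\sqrt5$ correctly; and in the reflection step $1-\beta=\alpha$ indeed holds, with $\cot(\pi\beta)=\cot\bigl(\frac{\pi}{2}-\frac{\sqrt5\pi}{2}\bigr)=\tan\frac{\sqrt5\pi}{2}$, which is the same complementary-angle identity the paper invokes in the proof of Theorem \ref{thm1}. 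An alternative route, closer to Theorem \ref{thm1}, would be to set $z=\alpha$ and $z=\beta$ in the generating function $\sum_{k\ge1}(\zeta(2k)-1)z^{2k-1}$ and subtract, but that is no shorter; your digamma argument is clean and complete as it stands.
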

\begin{theorem}\label{thm1}
	For $m\geq0$, we have
	\begin{equation} \label{main1}
	\sum_{k=1}^\infty (\zeta(2k)-1) F_{2k+m-1} = \frac{\pi}{2\sqrt{5}}\tan\!\frac{\sqrt{5}\pi}{2}L_{m} + \frac{1}{2}F_{m+2}
	\end{equation}
	and
	\begin{equation} \label{main2}
	\sum_{k=1}^\infty (\zeta(2k)-1) L_{2k+m-1} = \frac{\sqrt{5}\pi}{2}\tan\frac{\sqrt{5}\pi}{2}F_{m} + \frac{1}{2}L_{m+2}.
	\end{equation}
\end{theorem}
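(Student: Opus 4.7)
The overall strategy is to reduce both identities to the evaluation of a single auxiliary generating function at $x=\alpha$ and $x=\beta$.

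First I would establish the generating function identity
\[
f(x):=\sum_{k=1}^\infty (\zeta(2k)-1)\,x^{2k}=\frac{1-\pi x\cot(\pi x)}{2}-\frac{x^2}{1-x^2}, \qquad 0<|x|<2,\ x\notin\mathbb Z.
\]
This is obtained by writing $\zeta(2k)-1=\sum_{n\ge 2}n^{-2k}$, interchanging the order of summation (justified by absolute convergence when $|x|<2$), summing the resulting geometric series to $\sum_{n\ge 2}\tfrac{x^2}{n^2-x^2}$, and then applying the classical Mittag--Leffler expansion $\pi x\cot(\pi x)=1-2\sum_{n\ge 1}\tfrac{x^2}{n^2-x^2}$ (equivalent to combining \eqref{psi_id2} with the reflection property \eqref{psi_id3}) while peeling off the $n=1$ term.

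By the Binet formulas, the two sums of interest become
\[
\sum_{k=1}^\infty (\zeta(2k)-1)F_{2k+m-1}=\frac{\alpha^{m-1}f(\alpha)-\beta^{m-1}f(\beta)}{\sqrt 5}
\]
and the analogous Lucas combination $\alpha^{m-1}f(\alpha)+\beta^{m-1}f(\beta)$; both are absolutely convergent since $|\alpha|<2$ and $|\beta|<1$. To evaluate $f(\alpha)$ and $f(\beta)$ in closed form I would use (i) $\alpha^2=\alpha+1$ to get $\tfrac{\alpha^2}{1-\alpha^2}=\beta-1$ and symmetrically $\tfrac{\beta^2}{1-\beta^2}=\alpha-1$, and (ii) the complementary-angle formulas
\[
\cot(\pi\alpha)=\cot\!\left(\tfrac{\pi}{2}+\tfrac{\pi\sqrt 5}{2}\right)=-\tan\tfrac{\pi\sqrt 5}{2}, \qquad \cot(\pi\beta)=\tan\tfrac{\pi\sqrt 5}{2}.
\]

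Substituting these values and simplifying with the relations $\alpha+\beta=1$, $\alpha\beta=-1$ (so $\alpha^{m-1}\beta=-\alpha^{m-2}$ and $\beta^{m-1}\alpha=-\beta^{m-2}$), the cotangent contribution collects, in the Fibonacci case, to $\tfrac{\pi}{2\sqrt 5}\tan(\pi\sqrt 5/2)\,L_m$, while the algebraic part reduces to $\tfrac{3}{2}F_{m-1}+F_{m-2}$; the parallel Lucas computation yields $\tfrac{\pi\sqrt 5}{2}\tan(\pi\sqrt 5/2)\,F_m$ together with $\tfrac{3}{2}L_{m-1}+L_{m-2}$. Finally, the elementary identity $G_{m+2}=3G_{m-1}+2G_{m-2}$, valid for any sequence $G$ satisfying the Fibonacci recurrence (apply $G_{m+2}=G_{m+1}+G_m=2G_m+G_{m-1}=3G_{m-1}+2G_{m-2}$), recognises these remainders as $\tfrac{1}{2}F_{m+2}$ and $\tfrac{1}{2}L_{m+2}$, which matches \eqref{main1} and \eqref{main2}.

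The only genuine obstacle is careful bookkeeping in the last step: the four cross-terms arising from $\alpha^{m-1}f(\alpha)\pm\beta^{m-1}f(\beta)$ mix through $\alpha\beta=-1$ in subtly different ways depending on the $\pm$, and the cotangent pieces must be combined with the correct signs so that the $+$ combination yields the factor $\sqrt 5\,F_m$ and the $-$ combination yields $L_m/\sqrt 5$. Once this sign tracking is handled, the passage to $\tfrac{1}{2}F_{m+2}$ and $\tfrac{1}{2}L_{m+2}$ is a one-line application of the recurrence.
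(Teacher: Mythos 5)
Your proposal is correct and takes essentially the same route as the paper: your closed form for $f(x)$ is (after multiplying the paper's series by $z$) exactly the Srivastava generating function the authors cite, evaluated at $\alpha$ and $\beta$ via Binet with the same conversion $\cot\pi\alpha=-\tan\frac{\sqrt5\pi}{2}$, $\cot\pi\beta=\tan\frac{\sqrt5\pi}{2}$. The only cosmetic differences are that you derive the generating function from the Mittag--Leffler expansion rather than quoting it, and you finish via $3G_{m-1}+2G_{m-2}=G_{m+2}$ where the paper instead uses $3\alpha+2=\alpha^4$ to write the algebraic part directly as $\tfrac12\alpha^{m+2}$.
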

\begin{proof}
	From \cite[p.~281]{Srivastava} we know that
	\begin{equation*}
	\sum_{k=1}^\infty (\zeta(2k)-1) z^{2k-1} = -\frac{\pi}{2} \cot\pi z + \frac{3z^2-1}{2z(z^2-1)}, \quad |z|<2.
	\end{equation*}
	This gives with $z=\alpha$
	\begin{equation*}
	\sum_{k=1}^\infty (\zeta(2k)-1) \alpha^{2k-1} = -\frac{\pi}{2} \cot\pi\alpha + \frac{\alpha^2}{2},
	\end{equation*}
	where we have used that $3\alpha+2=\alpha^4$ and $\alpha^2=\alpha+1$. Hence,
	\begin{equation*}
	\sum_{k=1}^\infty (\zeta(2k)-1) \alpha^{2k+m-1} = -\frac{\pi}{2} \alpha^{m} \cot\pi \alpha + \frac{\alpha^{m+2}}{2}.
	\end{equation*}
	
	In the same way, we get
	\begin{equation*}
	\sum_{k=1}^\infty (\zeta(2k)-1) \beta^{2k+m-1} = -\frac{\pi}{2} \beta^{m} \cot\pi \beta + \frac{\beta^{m+2}}{2}.
	\end{equation*}
	Combining these equations according to the Binet formula and making use of the fact that $\cot(\pi/2-x)=\tan x$
	we obtain
	\begin{align*}
	\sum_{k=1}^\infty (\zeta(2k)-1) F_{2k+m-1} &= \frac{\pi}{2 \sqrt{5}}\big(\beta^{m} \cot\pi\beta- \alpha^{m} \cot\pi\alpha\big) + \frac{1}{2}F_{m+2} \\
	& = \frac{\pi}{2 \sqrt{5}}\Big (\beta^{m} \tan\frac{\sqrt{5}\pi}{2}+ \alpha^{m} \tan\frac{\sqrt{5}\pi}{2}\Big) + \frac{1}{2}F_{m+2}
	\end{align*}
	and the proof of \eqref{main1} is completed. The identity \eqref{main2} is proved similarly and omitted.
\end{proof}

Explicit examples for $m=0$ and $m=1$ are
\begin{equation*}
\sum_{k=1}^\infty (\zeta(2k)-1) F_{2k-1} = \frac{\pi}{\sqrt{5}}\tan\frac{\sqrt{5}\pi}{2} + \frac{1}{2},
\end{equation*}
\begin{equation*}
\sum_{k=1}^\infty (\zeta(2k)-1) L_{2k-1} = \frac{3}{2},
\end{equation*}
\begin{equation*}
\sum_{k=1}^\infty (\zeta(2k)-1) F_{2k} = \frac{\pi}{2\sqrt{5}}\tan\frac{\sqrt{5}\pi}{2}+ 1,
\end{equation*}
and
\begin{equation*}
\sum_{k=1}^\infty (\zeta(2k)-1) L_{2k} = \frac{\sqrt{5}\pi}{2}\tan\frac{\sqrt{5}\pi}{2} + 2.
\end{equation*}
\begin{remark}\label{mainrem}
	 We point out that instead of proving \eqref{main2} directly (as we did implicitly), it can also be deduced from \eqref{main1} using $5F_n = L_{n+1} + L_{n-1}$ and $L_n = F_{n+1} + F_{n-1}$.
\end{remark}
	
A proof comparable to the one given for Theorem \ref{thm1} yields the following series. 
The similar results are deducible from the other theorems.
\begin{corollary}
	For $n\geq1$, 
	\begin{equation*}
	\sum_{k=1}^{\infty}
	\frac{\zeta(2k)-1}{n^{2k-1}}F_{2k-1}=\frac{1}{\sqrt5}\cdot\frac{\pi\sin\frac{\pi\sqrt5}{n}}{\cos\frac{\pi\sqrt5}{n}-\cos\frac{\pi}{n}}+\frac{2n(n^4-5n^2+3)}{(2n^2-3+\sqrt5)(2n^2-3-\sqrt5)}\end{equation*}
	and
	\begin{equation*}
	\sum_{k=1}^{\infty}
	\frac{\zeta(2k)-1}{n^{2k-1}}L_{2k-1}=\frac{\pi\sin\frac{\pi}{n}}{\cos\frac{\pi}{n}-\cos\frac{\pi\sqrt5}{n}}-\frac{2n(n^4-n^2+3)}{(2n^2-3+\sqrt5)(2n^2-3-\sqrt5)}.
	\end{equation*}
\end{corollary}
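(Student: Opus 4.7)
The plan is to mimic the proof of Theorem~\ref{thm1} but with $z=\alpha/n$ and $z=\beta/n$ substituted (in place of $z=\alpha,\beta$) into the Srivastava identity
\begin{equation*}
\sum_{k=1}^\infty (\zeta(2k)-1)z^{2k-1} = -\frac{\pi}{2}\cot\pi z + \frac{3z^2-1}{2z(z^2-1)}, \quad |z|<2.
\end{equation*}
These substitutions are legitimate for every $n\geq 1$ since $|\alpha|<2$. Writing $R(z)=(3z^2-1)/(2z(z^2-1))$ and using the Binet formulas, subtracting the two resulting equations and dividing by $\sqrt 5$ yields
\begin{equation*}
\sum_{k=1}^\infty \frac{\zeta(2k)-1}{n^{2k-1}}F_{2k-1} = \frac{\pi}{2\sqrt 5}\Bigl(\cot\tfrac{\pi\beta}{n}-\cot\tfrac{\pi\alpha}{n}\Bigr)+\frac{1}{\sqrt 5}\bigl(R(\alpha/n)-R(\beta/n)\bigr),
\end{equation*}
while adding them produces the analogous expression for the Lucas sum (with each minus sign turned into a plus and without the factor $1/\sqrt 5$).

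For the trigonometric term I would invoke $\cot X-\cot Y=\sin(Y-X)/(\sin X\sin Y)$ and $\cot X+\cot Y=\sin(X+Y)/(\sin X\sin Y)$ together with the product-to-sum identity $2\sin X\sin Y=\cos(X-Y)-\cos(X+Y)$. With $X=\pi\alpha/n$ and $Y=\pi\beta/n$, the identities $\alpha-\beta=\sqrt 5$ and $\alpha+\beta=1$ collapse the shared denominator to $\tfrac12\bigl(\cos(\pi\sqrt 5/n)-\cos(\pi/n)\bigr)$, and one reads off exactly the first summand in each asserted formula.

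For the rational part I would rewrite $R(\alpha/n)=n(3\alpha^2-n^2)/\bigl(2\alpha(\alpha^2-n^2)\bigr)$ and combine the two fractions over a common denominator. That denominator becomes $\alpha\beta(\alpha^2-n^2)(\beta^2-n^2)$, which by $\alpha\beta=-1$ and $\alpha^2+\beta^2=3$ simplifies to $-(n^4-3n^2+1)$; a side check confirms $(2n^2-3+\sqrt 5)(2n^2-3-\sqrt 5)=4(n^4-3n^2+1)$. The numerator, after grouping, reduces through the elementary evaluations $\alpha^2\beta^2=1$, $\alpha+\beta=1$, $\alpha^3+\beta^3=L_3=4$ and $\alpha^3-\beta^3=\sqrt 5 F_3=2\sqrt 5$ to $-\sqrt 5(n^4-5n^2+3)$ in the Fibonacci case and to $n^4-n^2+3$ in the Lucas case, yielding the second summands with the correct signs. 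The main obstacle is purely computational: tracking signs and factors through the rational simplification and spotting the factorization of $4(n^4-3n^2+1)$ that puts the answer in the form stated in the corollary.
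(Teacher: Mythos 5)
Your proposal is correct and follows exactly the route the paper intends: the paper gives no separate proof, stating only that the corollary follows by an argument comparable to Theorem~\ref{thm1}, i.e.\ substituting $z=\alpha/n$ and $z=\beta/n$ into the same Srivastava generating function and combining via Binet. Your trigonometric reduction and the rational simplification (including the identification $(2n^2-3+\sqrt5)(2n^2-3-\sqrt5)=4(n^4-3n^2+1)$ and the numerators $-\sqrt5(n^4-5n^2+3)$ and $n^4-n^2+3$) check out.
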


As could be expected, the formula including the odd zeta values is more involved and possesses a semi-closed form.
\begin{theorem}\label{thm2}
	For $m\geq 0$, we have
	\begin{align} \label{main3}
	\sum_{k=1}^\infty (\zeta(2k+1)-1) F_{2k+m}&= \frac{F_{m}+F_{m+2}}{2}\nonumber\\
	- F_{m}&\frac{\pi}{\sqrt{5}}\tan\frac{\sqrt{5}\pi}{2}
	-\frac{F_{m}}{2} \sum_{n=1}^\infty \frac{1}{n(n+1)(n^2+3n+1)},
	\end{align}
	\begin{align} \label{main4}
	\sum_{k=1}^\infty (\zeta(2k+1)-1) L_{2k+m}& = \frac{L_{m}+L_{m+2}}{2}\nonumber\\
	- L_{m}&\frac{\pi}{\sqrt{5}}\tan\frac{\sqrt{5}\pi}{2}
	-\frac{L_{m}}{2} \sum_{n=1}^\infty \frac{1}{n(n+1)(n^2+3n+1)}.
	\end{align}
\end{theorem}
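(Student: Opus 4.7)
The plan is to emulate the proof of Theorem~\ref{thm1} using the odd-argument analogue of its key series. Swapping the order of summation yields, for $|z|<2$,
\begin{equation*}
\sum_{k=1}^\infty (\zeta(2k+1)-1)\,z^{2k} \;=\; \sum_{n=2}^\infty \frac{z^{2}}{n(n^{2}-z^{2})}.
\end{equation*}
Both $z=\alpha<2$ and $z=\beta$ with $|\beta|<1$ are admissible. Using the partial fraction
\begin{equation*}
\frac{z^{2}}{n(n^{2}-z^{2})} \;=\; \frac{1}{2}\!\left(\frac{1}{n-z}-\frac{1}{n}\right) + \frac{1}{2}\!\left(\frac{1}{n+z}-\frac{1}{n}\right)
\end{equation*}
together with \eqref{psi_id2}, one arrives at a closed form involving $\psi(1\pm z)$, up to the finite correction of the missing $n=1$ term.

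The pivotal algebraic facts are $\alpha+\beta=1$ (hence $1-\alpha=\beta$, $1-\beta=\alpha$) and $1/\alpha=-\beta$, $1/\beta=-\alpha$. Combined with \eqref{psi_id1} these allow us to express $\psi(1\pm\alpha)$ and $\psi(1\pm\beta)$ in terms of the single combination $T:=\psi(1+\alpha)+\psi(1+\beta)$, producing
\begin{equation*}
\sum_{k=1}^\infty (\zeta(2k+1)-1)\,\alpha^{2k} \;=\; \frac{\alpha}{2}-\gamma-\frac{T}{2},
\end{equation*}
and its $\beta$-analogue. Multiplying by $\alpha^{m}$ and $\beta^{m}$ respectively and combining via Binet yields
\begin{equation*}
\sum_{k=1}^\infty (\zeta(2k+1)-1)\,F_{2k+m} \;=\; \frac{F_{m+1}}{2}-F_{m}\!\left(\gamma+\frac{T}{2}\right),
\end{equation*}
with the Lucas formula obtained by replacing the Binet subtraction by addition.

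The main obstacle is the evaluation of $T$, since no closed form in elementary constants is available. The plan is to establish
\begin{equation*}
T \;=\; -2\gamma-2+\frac{2\pi}{\sqrt{5}}\tan\frac{\sqrt{5}\pi}{2}+\sum_{n=1}^\infty \frac{1}{n(n+1)(n^{2}+3n+1)}
\end{equation*}
by expanding the last sum in partial fractions using $n^{2}+3n+1=(n+1+\alpha)(n+1+\beta)$. The identity $(1+\alpha)(1+\beta)=\alpha^{2}\beta^{2}=1$ pins the residues at $n=0,-1$ to both equal $1$, while the other two residues come out to $-1\pm 2/\sqrt{5}$. Grouping the four resulting subseries into a symmetric part (coefficients $+1,+1,-1,-1$) and an asymmetric part (coefficients $\pm 2/\sqrt{5}$), \eqref{psi_id1}, \eqref{psi_id2} and $1/(1+\alpha)+1/(1+\beta)=3$ reduce the symmetric part to $2\gamma+T+2$, whereas the asymmetric part reduces to $-\frac{2}{\sqrt{5}}\bigl(\psi(1+\alpha)-\psi(1+\beta)\bigr)$. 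The reflection \eqref{psi_id3} at $z=\alpha$, together with $\cot\pi\alpha=-\tan(\sqrt{5}\pi/2)$ (because $\pi\alpha=\pi/2+\pi\sqrt{5}/2$) and Lemma~\ref{mainlem1}, then evaluates the last difference as $\sqrt{5}+\pi\tan(\sqrt{5}\pi/2)$, which delivers the displayed formula for $T$.

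Substituting this value into the expression above and using $F_{m+1}/2+F_{m}=(F_{m}+F_{m+2})/2$ produces \eqref{main3}. The companion identity \eqref{main4} follows by running the analogous Lucas combination or, more economically, by the method indicated in Remark~\ref{mainrem}.
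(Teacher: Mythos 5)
Your proposal is correct, and your final formula for $T=\psi(1+\alpha)+\psi(1+\beta)$ checks out both analytically and numerically, so the theorem does follow from your chain of reductions. It is the same general machinery as the paper (digamma-based generating function, Binet, the identities \eqref{psi_id1}--\eqref{psi_id3}), but organized differently: the paper quotes Srivastava's expansion $\sum_{k\ge1}(\zeta(2k+1)-1)z^{2k}=(1-\gamma)-\tfrac12(\psi(2+z)+\psi(2-z))$, pushes it forward through Binet into the four auxiliary series $\sum 1/(n^2+n-1)$, $\sum 1/(n(n^2+n-1))$, $\sum 1/(n^2+3n+1)$, $\sum 1/(n(n^2+3n+1))$, and recombines them via Lemma~\ref{mainlem1} and an index shift; you instead derive the generating function from scratch, compress everything into the single constant $T$, and then evaluate $T$ ``backwards'' by partial-fractioning the target series $\sum_n 1/(n(n+1)(n^2+3n+1))$ and invoking the reflection formula. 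Your route is arguably cleaner in that only one unknown constant has to be pinned down, and it makes transparent why exactly this quartic series appears in the answer; the paper's route avoids having to guess the target series in advance. Two small remarks on your write-up. First, as literally stated your intermediate claims are inconsistent by $2$: with the symmetric part equal to $2\gamma+T+2$, the asymmetric part must reduce to $2-\frac{2}{\sqrt5}\bigl(\psi(1+\alpha)-\psi(1+\beta)\bigr)$, the extra $2$ coming from $\frac{2}{\sqrt5}\bigl(\frac{1}{1+\beta}-\frac{1}{1+\alpha}\bigr)=\frac{2}{\sqrt5}(\alpha^2-\beta^2)=2$ when the shifted arguments $\psi(2\pm\cdot)$ are lowered to $\psi(1\pm\cdot)$; with that term restored one lands exactly on your displayed (correct) value of $T$, so this is a bookkeeping slip, not a gap. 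Second, citing both \eqref{psi_id3} and Lemma~\ref{mainlem1} for $\psi(1+\alpha)-\psi(1+\beta)=\sqrt5+\pi\tan\frac{\sqrt5\pi}{2}$ is redundant: either the reflection formula alone, or the observation $\psi(1+\alpha)-\psi(1+\beta)=\sqrt5\sum_{n\ge1}1/(n^2+n-1)$ together with Lemma~\ref{mainlem1}, suffices.
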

\begin{proof}
	From \cite[p.~280]{Srivastava} we have the following generating function:
	\begin{equation*}
	\sum_{k=1}^\infty (\zeta(2k+1)-1) z^{2k} = (1-\gamma) - \frac{1}{2}\big (\psi(2+z) + \psi(2-z)\big), \quad |z|<2.
	\end{equation*}
	This relation combined with the Binet formula and $2-\alpha=\alpha^{-2}=\beta^2$, $2-\beta=\alpha^2$ yields
	\begin{align*}
	\sum_{k=1}^\infty (\zeta(2k+1)-1) F_{2k+m} & = (1-\gamma)F_m + \frac{1}{2\sqrt{5}}\big (\beta^m \psi(\beta^2+1)-\alpha^m \psi(\beta^2)\big ) \\
	& - \frac{1}{2\sqrt{5}}\big (\alpha^m \psi(\alpha^2+1) -\beta^m \psi(\alpha^2)\big ).
	\end{align*}
	Now, we note that
	\begin{align*}
	\beta^m \psi(\beta^2+1)-\alpha^m \psi(\beta^2)	= L_m \big(\psi(\beta^2+1)-\psi(\beta^2)\big) - \alpha^m \psi(\beta^2+1) + \beta^m \psi(\beta^2)
	\end{align*}
	and
	\begin{align*}
	\alpha^m \psi(\alpha^2+1)-\beta^m \psi(\alpha^2)	= L_m \big(\psi(\alpha^2+1)-\psi(\alpha^2)\big) - \beta^m \psi(\alpha^2+1) + \alpha^m \psi(\alpha^2).
	\end{align*}
	Gathering terms and keeping in mind property \eqref{psi_id1} we arrive at
	\begin{equation*}
	\sum_{k=1}^\infty (\zeta(2k+1)-1) F_{2k+m} = (1-\gamma)F_m + \frac{1}{2}L_m + \frac{1}{2\sqrt{5}}(S_1 + S_2),
	\end{equation*}
	with
	\begin{gather*}
	S_1 = \beta^m \psi(\beta^2) - \alpha^m \psi(\alpha^2),\qquad
	S_2 = \beta^m \psi(\alpha^2+1) - \alpha^m \psi(\beta^2+1).
	\end{gather*}
	
	Next, we apply property \eqref{psi_id2} to get
	\begin{align*}
	S_1 & = \gamma\sqrt{5} F_m + \sum_{n=1}^\infty \Big (\frac{\beta^m}{n}-\frac{\beta^m}{n+\beta}-\frac{\alpha^m}{n}+\frac{\alpha^m}{n+\alpha}\Big ) \\
	& = \gamma\sqrt{5} F_m - \sum_{n=1}^\infty \frac{(n-1)\sqrt{5}F_m + n\sqrt{5}F_{m-1}}{n(n+\alpha)(n+\beta)} \\
	& = \gamma\sqrt{5} F_m - \sqrt{5}F_{m+1}\sum_{n=1}^\infty \frac{1}{n^2+n-1}+\sqrt{5}F_{m}\sum_{n=1}^\infty \frac{1}{n(n^2+n-1)}.
	\end{align*}
	
	Similarly for $S_2$,
	\begin{align*}
	S_2 & =  \gamma\sqrt{5} F_m + \sum_{n=1}^\infty \frac{-(3n+1)\sqrt{5}F_m + n\sqrt{5}F_{m+2}}{n(n+\alpha^2)(n+\beta^2)} \\
	& =  \gamma\sqrt{5} F_m + \sqrt{5}F_{m+1}\sum_{n=1}^\infty \frac{1}{n^2+3n+1} \\
	& - 2\sqrt{5}F_{m}\sum_{n=1}^\infty \frac{1}{n^2+3n+1} - \sqrt{5}F_{m}\sum_{n=1}^\infty \frac{1}{n(n^2+3n+1)} .
	\end{align*}
	Hence,
	\begin{align*}
	\sum_{k=1}^\infty (\zeta(2k+1)-1) F_{2k+m}& = F_m + \frac{L_m}{2}- \frac{F_{m+1}}{2}\sum_{n=1}^\infty \frac{1}{n^2+n-1} +
	\frac{F_{m}}{2}\sum_{n=1}^\infty \frac{1}{n(n^2+n-1)} \\
	&+ \Big (\frac{F_{m+1}}{2} - F_{m}\Big )\sum_{n=1}^\infty \frac{1}{n^2+3n+1} - \frac{F_{m}}{2}\sum_{n=1}^\infty \frac{1}{n(n^2+3n+1)}.
	\end{align*}
	
	To simplify further, we note that
	\begin{equation*}
	\sum_{n=1}^\infty \frac{1}{n^2+n-1} = \sum_{n=0}^\infty \frac{1}{(n+1)^2+(n+1)-1} = 1 + \sum_{n=1}^\infty \frac{1}{n^2+3n+1},
	\end{equation*}
	so that from Lemma \ref{mainlem1} we get
	\begin{equation*}
	\sum_{n=1}^\infty \frac{1}{n^2+3n+1} = \frac{\pi}{\sqrt{5}} \tan\frac{\sqrt{5}\pi}{2}.
	\end{equation*}
	Also
	\begin{equation*}
	\sum_{n=1}^\infty \Big (\frac{1}{n(n^2+3n+1)}-\frac{1}{n(n^2+3n+1)}\Big) = 1 - \sum_{n=1}^\infty \frac{1}{n(n+1)(n^2+3n+1)}
	\end{equation*}
	and we finally end with
	\begin{align*}
	\sum_{k=1}^\infty (\zeta(2k+1)-1) F_{2k+m} & = \frac{3F_{m}+L_{m}-F_{m+1}}{2} -
	F_{m}\frac{\pi}{\sqrt{5}}\tan\frac{\sqrt{5}\pi}{2}\\
	&-\frac{F_{m}}{2} \sum_{n=1}^\infty \frac{1}{n(n+1)(n^2+3n+1)},
	\end{align*}
	from which our statement follows upon simple manipulations of the first term on the right-hand side. This completes the proof of \eqref{main3}. The statement \eqref{main4} can be proved either analogously or using the relations from Remark \ref{mainrem}.
\end{proof}

When $m=0$, then from \eqref{main3} we get the expressions
\begin{gather*}
\sum_{k=1}^\infty (\zeta(2k+1)-1) F_{2k} = \frac{1}{2},\\
\sum_{k=1}^\infty (\zeta(2k+1)-1) L_{2k}=\frac{5}{2}-\frac{2\pi}{\sqrt{5}}\tan\frac{\sqrt{5}\pi}{2}
-\sum_{n=1}^\infty \frac{1}{n(n+1)(n^2+3n+1)}.
\end{gather*}

In view of \eqref{id1} we arrive at the beautiful result
\begin{displaymath}
\sum_{k=1}^\infty \zeta(2k+1) \frac{F_{2k}}{5^k} = \sum_{k=1}^\infty (\zeta(2k+1)-1) F_{2k} = \frac{1}{2}.
\end{displaymath}

The next result generalizes an identity from \cite{Frontczak-NNTDM}.
\begin{theorem}\label{thm3}
	For $m\geq0$, we have
	\begin{gather*} 
	\sum_{k=2}^\infty (\zeta(k)-1) F_{k+m-1}	= F_{m+1} + F_{m-1}\frac{\pi}{\sqrt{5}}\tan\frac{\sqrt{5}\pi}{2}
	+ F_{m} \sum_{n=1}^\infty \frac{1}{n(n^2+n-1)},\\
	\sum_{k=2}^\infty (\zeta(k)-1) L_{k+m-1}= L_{m+1} + L_{m-1}\frac{\pi}{\sqrt{5}}\tan\frac{\sqrt{5}\pi}{2}
	+ L_{m} \sum_{n=1}^\infty \frac{1}{n(n^2+n-1)}.
	\end{gather*}
\end{theorem}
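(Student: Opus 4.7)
The plan is to build and apply a generating function for $\sum_{k=2}^\infty(\zeta(k)-1)z^{k-1}$, in the spirit of Theorems~\ref{thm1} and~\ref{thm2}. Writing $\zeta(k)-1=\sum_{n\ge 2}n^{-k}$ and interchanging the order of summation (justified for $|z|<2$ by absolute convergence, which covers both $z=\alpha$ and $z=\beta$ since $\alpha<2$), I would first establish the partial-fraction generating function
\begin{equation*}
\sum_{k=2}^\infty(\zeta(k)-1)z^{k-1}=\sum_{n=2}^\infty\Big(\frac{1}{n-z}-\frac{1}{n}\Big),\qquad |z|<2.
\end{equation*}

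Evaluating at $z=\alpha$ and $z=\beta$, multiplying by $\alpha^m$ and $\beta^m$, and combining via Binet produces a single sum over $n\ge 2$. Placing the three resulting terms over the common denominator $n(n^2-n-1)$, and invoking $(n-\alpha)(n-\beta)=n^2-n-1$ together with the elementary Binet identity $\alpha^m\beta-\alpha\beta^m=-\sqrt{5}\,F_{m-1}$ (from $\alpha\beta=-1$), the numerator collapses to $\sqrt{5}(nF_{m+1}+F_m)$. This yields the intermediate form
\begin{equation*}
\sum_{k=2}^\infty(\zeta(k)-1)F_{k+m-1}=F_{m+1}\sum_{n=2}^\infty\frac{1}{n^2-n-1}+F_{m}\sum_{n=2}^\infty\frac{1}{n(n^2-n-1)}.
\end{equation*}

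It remains to massage these two tails into the shape demanded by the theorem. The reindexing $n\mapsto n+1$ turns both series into sums in $n^2+n-1$; the first is then exactly Lemma~\ref{mainlem1}, giving $1+\frac{\pi}{\sqrt{5}}\tan\frac{\sqrt{5}\pi}{2}$. For the second, the reindexed sum $\sum_{n\ge 1}\frac{1}{(n+1)(n^2+n-1)}$ differs from $\sum_{n\ge 1}\frac{1}{n(n^2+n-1)}$ by $\sum_{n\ge 1}\frac{1}{n(n+1)(n^2+n-1)}$; the key observation $n(n+1)=(n^2+n-1)+1$ splits this last sum as $\sum\frac{1}{n^2+n-1}-\sum\frac{1}{n(n+1)}$, which via Lemma~\ref{mainlem1} and the telescoping $\sum\frac{1}{n(n+1)}=1$ collapses to just $\frac{\pi}{\sqrt{5}}\tan\frac{\sqrt{5}\pi}{2}$. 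Assembling the pieces and using $F_{m+1}-F_m=F_{m-1}$ yields the Fibonacci identity; the Lucas version then follows either from the parallel computation with $\alpha^m+\beta^m=L_m$ or directly via Remark~\ref{mainrem}.

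The main obstacle is the final bookkeeping: several near-identical series involving $n^2\pm n-1$ appear, and one must re-index and telescope them in precisely the right order so that the answer lands in the stated form — in particular, with $F_{m-1}$ (and not $F_{m+1}$) multiplying the $\tan$ term. Once the auxiliary identity $\sum_{n\ge 1}\frac{1}{n(n+1)(n^2+n-1)}=\frac{\pi}{\sqrt{5}}\tan\frac{\sqrt{5}\pi}{2}$ is recognized as nothing more than a disguise of Lemma~\ref{mainlem1}, everything snaps into place.
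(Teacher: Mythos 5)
Your argument is correct: the interchange of summation is legitimate for $|z|<2$ (so at $z=\alpha$ and $z=\beta$), the numerator does collapse to $\sqrt{5}\,(nF_{m+1}+F_m)$ over the denominator $n(n^2-n-1)$, the reindexing $n\mapsto n+1$, the telescoping identity $\sum_{n\ge1}\frac{1}{n(n+1)(n^2+n-1)}=\frac{\pi}{\sqrt{5}}\tan\frac{\sqrt{5}\pi}{2}$, and the final step $F_{m+1}-F_m=F_{m-1}$ all check out, and the Lucas case follows as you say. The only real difference from the paper is how the generating function enters: the paper quotes $\sum_{k\ge2}(\zeta(k)-1)z^{k-1}=1-\gamma-\psi(2-z)$ from Srivastava--Choi and then disposes of the rest with ``the remainder of the proof is as above,'' i.e.\ by expanding the digamma through \eqref{psi_id2} exactly as in the proof of Theorem \ref{thm2}, which introduces and later cancels the constant $\gamma$; you instead write $\zeta(k)-1=\sum_{n\ge2}n^{-k}$ and swap summations to get the partial-fraction form $\sum_{n\ge2}\bigl(\frac{1}{n-z}-\frac{1}{n}\bigr)$ directly. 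These are the same object, since $1-\gamma-\psi(2-z)=\sum_{n\ge2}\bigl(\frac{1}{n-z}-\frac{1}{n}\bigr)$, so the subsequent bookkeeping is essentially identical and both routes funnel through Lemma \ref{mainlem1}. What your version buys is self-containedness: no digamma function, no Euler--Mascheroni constant, no external citation beyond Lemma \ref{mainlem1}, and you actually carry out the tail manipulations the paper leaves as an exercise. What the paper's version buys is uniformity, since the same cited family of generating functions drives Theorems \ref{thm1}, \ref{thm2}, \ref{thm4}--\ref{thm6}.
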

\begin{proof}
	Here, we work with the generating function
	\begin{equation*}
	\sum_{k=2}^\infty (\zeta(k)-1) z^{k-1} = 1-\gamma - \psi(2-z), \quad |z|<2,
	\end{equation*}
	which also comes from \cite[p.~280]{Srivastava}. This gives
	\begin{equation*}
	\sum_{k=2}^\infty (\zeta(k)-1) F_{k+m-1}
	= (1-\gamma)F_{m} + \frac{1}{\sqrt{5}}\big (\beta^{m} \psi(\alpha+1)
	-\alpha^{m} \psi(\beta+1)\big ).
	\end{equation*}
	
	The remainder of the proof is as above and we leave it as an exercise.
\end{proof}

When $m=0$, we use the Fibonacci relation $F_{-n}=(-1)^{n+1}F_{n}$ to see that
\begin{equation*}
\sum_{k=2}^\infty (\zeta(k)-1) F_{k-1} = 1 + \frac{\pi}{\sqrt{5}}\tan\frac{\sqrt{5}\pi}{2},
\end{equation*}
which appears in \cite{Frontczak-NNTDM}.

\section{Further related series}

In this section we study some series that are closely related to the series form the last section.
\begin{theorem}\label{thm4}
	Let $m\geq 0$. Then we have
	\begin{equation*} 
	\sum_{k=1}^\infty (\zeta(2k)-1) \frac{F_{2k+m-1}}{k} = F_{m-1}\ln\Big (\!-\pi\sec\frac{\sqrt{5}\pi}{2}\Big)
	+ \frac{2}{\sqrt{5}}L_{m-1}\ln\alpha
	\end{equation*}
	and
	\begin{equation*} 
	\sum_{k=1}^\infty (\zeta(2k)-1) \frac{L_{2k+m-1}}{k} = L_{m-1}\ln\Big (\!-\pi\sec\frac{\sqrt{5}\pi}{2}\Big)
	+ 2 \sqrt{5} F_{m-1}\ln\alpha.
	\end{equation*}
\end{theorem}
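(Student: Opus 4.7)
The plan is to mirror the strategy of Theorem~\ref{thm1}---evaluate a closed-form generating function at $z=\alpha$ and $z=\beta$ and combine via the Binet formulas---but the extra factor $1/k$ tells us to first integrate the generating function used there. Concretely, I would multiply
\[
\sum_{k=1}^{\infty}(\zeta(2k)-1)z^{2k-1} = -\frac{\pi}{2}\cot\pi z + \frac{3z^2-1}{2z(z^2-1)}
\]
by $2$ and integrate from $0$ to $z$, turning the left-hand side into $\sum_{k\geq 1}(\zeta(2k)-1)z^{2k}/k$.

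The right-hand integrand has apparent singularities at $t=0$ and $t=1$, but the partial fraction decomposition
\[
\frac{3t^2-1}{t(t^2-1)} = \frac{1}{t}+\frac{1}{t-1}+\frac{1}{t+1}
\]
pairs $1/t$ against $-\pi\cot\pi t=-1/t+O(t)$ and $1/(t-1)$ against the expansion of $-\pi\cot\pi t$ near $t=1$, so the combined integrand is regular on $(0,2)$. Antidifferentiation produces $-\ln|\sin\pi t|+\ln|t(t^{2}-1)|$, whose limit at $t=0^{+}$ is $-\ln\pi$. Hence
\[
\sum_{k=1}^{\infty}(\zeta(2k)-1)\frac{z^{2k}}{k} = \ln\frac{\pi\,|z(z^{2}-1)|}{|\sin\pi z|}
\]
for real $z\in(0,2)$, $z\neq 1$.

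Next, I would substitute $z=\alpha$ and $z=\beta$. Using $\alpha^{2}=\alpha+1$ gives $\alpha(\alpha^{2}-1)=\alpha^{2}$, and similarly $\beta(\beta^{2}-1)=\beta^{2}$; the identity $\pi\alpha=\frac{\pi}{2}+\frac{\sqrt{5}\pi}{2}$ combined with $\alpha+\beta=1$ yields $\sin\pi\alpha=\cos\frac{\sqrt{5}\pi}{2}=\sin\pi\beta$. Since $\cos\frac{\sqrt{5}\pi}{2}<0$, we have $|\sin\pi\alpha|^{-1}=-\sec\frac{\sqrt{5}\pi}{2}$, and $|\beta|=1/\alpha$ makes the $\beta$-evaluation contribute $-2\ln\alpha$ where the $\alpha$-evaluation contributes $+2\ln\alpha$. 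Consequently
\[
\sum_{k=1}^{\infty}(\zeta(2k)-1)\frac{\alpha^{2k}}{k} = \ln\!\Big(\!-\pi\sec\frac{\sqrt{5}\pi}{2}\Big)+2\ln\alpha,
\]
\[
\sum_{k=1}^{\infty}(\zeta(2k)-1)\frac{\beta^{2k}}{k} = \ln\!\Big(\!-\pi\sec\frac{\sqrt{5}\pi}{2}\Big)-2\ln\alpha.
\]

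To finish, I would multiply these by $\alpha^{m-1}$ and $\beta^{m-1}$, respectively, and take the Binet combinations: the difference divided by $\sqrt{5}$ gives the Fibonacci identity (the $\ln(-\pi\sec\cdot)$ part pairs with $F_{m-1}$, the $\pm 2\ln\alpha$ part with $\tfrac{2}{\sqrt{5}}L_{m-1}$), while the sum gives the Lucas identity (with $L_{m-1}$ and $2\sqrt{5}F_{m-1}$, respectively). The main obstacle is the sign bookkeeping that turns $|\sin\pi\alpha|$ into the positive quantity $-\pi\sec\frac{\sqrt{5}\pi}{2}$ inside the logarithm; once that is handled, the rest is the same routine Binet manipulation used in the proof of Theorem~\ref{thm1}.
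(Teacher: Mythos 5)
Your proposal is correct and follows essentially the same route as the paper: the paper quotes the generating function $\sum_{k\ge1}(\zeta(2k)-1)\frac{z^{2k}}{k}=\ln\big(\pi z(1-z^2)\csc\pi z\big)$ directly from Srivastava--Choi and then substitutes $z=\alpha,\beta$, uses $\alpha^2-1=\alpha$, $\beta^2-1=\beta$, $\sin\pi\alpha=\sin\pi\beta=\cos\frac{\sqrt5\pi}{2}$ and the Binet combinations, exactly as you do. The only difference is that you re-derive that generating function by integrating the series used in Theorem~\ref{thm1} (with a careful check of the cancelling singularities at $t=0$ and $t=1$), which is a sound but inessential variation.
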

\begin{proof}
	We work with the following generating function from \cite[p.~281]{Srivastava}
	\begin{equation*}
	\sum_{k=1}^\infty (\zeta(2k)-1) \frac{z^{2k}}{k} = \ln\big (\pi z(1-z^2)\csc\pi z\big ), \qquad |z|<2.
	\end{equation*}
	This relation yields straightforwardly for $m\geq0$
	\begin{equation*}
	\sum_{k=1}^\infty (\zeta(2k)-1) \frac{F_{2k+m-1}}{k} = \frac{\alpha^{m-1} \ln\big(\!-\pi\alpha^2\csc\pi\alpha\big)
		- \beta^{m-1} \ln\big (\!-\pi\beta^2\csc\pi\beta\big)}{\sqrt{5}}.
	\end{equation*}
	The first expression is obtained by simplification using 
	$\sin\pi \alpha = \sin\pi \beta = \cos\frac{\sqrt{5}\pi}{2}$.
	The proof of the second expression is similar.
\end{proof}
When $m=0$ and $m=1$, then with the use of $L_{-n}=(-1)^n L_n$
\begin{equation*}
\sum_{k=1}^\infty (\zeta(2k)-1) \frac{F_{2k-1}}{k} = \ln\Big (\!-\pi\sec\frac{\sqrt{5}\pi}{2}\Big) - \frac{2}{\sqrt{5}}\ln\alpha,
\end{equation*}
\begin{equation*}
\sum_{k=1}^\infty (\zeta(2k)-1) \frac{L_{2k-1}}{k} = -\ln\Big (\!-\pi\sec\frac{\sqrt{5}\pi}{2}\Big ) + 2\sqrt{5}\ln\alpha,
\end{equation*}
\begin{equation*}
\sum_{k=1}^\infty (\zeta(2k)-1) \frac{F_{2k}}{k} = \frac{4}{\sqrt{5}}\ln\alpha,
\end{equation*}
and
\begin{equation*}
\sum_{k=1}^\infty (\zeta(2k)-1) \frac{L_{2k}}{k} = 2\ln\Big (\!-\pi\csc\frac{\sqrt{5}\pi}{2}\Big ).
\end{equation*}
\begin{theorem}\label{thm5}
	Let $m\geq0$. Then we have
	\begin{equation*} 
	\sum_{k=1}^\infty (\zeta(2k+1)-1) \frac{F_{2k+m}}{2k+1}= (1-\gamma)F_{m} + \frac{L_{m-1}}{2\sqrt{5}}\Big (
		\ln\Big (\!-\pi\sec\frac{\sqrt{5}\pi}{2}\Big) - 2\ln\Gamma(\alpha) - 4\ln\alpha\Big ),
		\end{equation*}
	and
		\begin{equation*}
		\sum_{k=1}^\infty (\zeta(2k+1)-1) \frac{L_{2k+m}}{2k+1}= (1-\gamma)L_{m} + \frac{\sqrt{5} F_{m-1}}{2}\Big (
	\ln\Big (\!-\pi\sec\frac{\sqrt{5}\pi}{2}\Big ) - 2\ln\Gamma(\alpha) - 4\ln\alpha\Big),
	\end{equation*}
	where  $\gamma$ is the Euler-Mascheroni constant.
\end{theorem}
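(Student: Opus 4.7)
The plan is to integrate the generating function used in the proof of Theorem \ref{thm2} to obtain a master identity involving $\Gamma$, then evaluate at $z=\alpha$ and $z=\beta$ and reduce everything to $\Gamma(\alpha)$ via reflection. Starting from
$$\sum_{k=1}^\infty (\zeta(2k+1)-1)\,z^{2k}=(1-\gamma)-\tfrac12\bigl(\psi(2+z)+\psi(2-z)\bigr),\qquad|z|<2,$$
a term-by-term antiderivative from $0$ to $z$, using $\int\psi(a\pm t)\,dt=\pm\ln\Gamma(a\pm t)$, yields
$$\sum_{k=1}^\infty (\zeta(2k+1)-1)\,\frac{z^{2k+1}}{2k+1}=(1-\gamma)z+\tfrac12\ln\frac{\Gamma(2-z)}{\Gamma(2+z)},\qquad|z|<2.$$
Since $\alpha<2$ and $|\beta|<2$, both $z=\alpha$ and $z=\beta$ are admissible substitutions.

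Splitting $F_{2k+m}=(\alpha^{2k+m}-\beta^{2k+m})/\sqrt{5}$ and writing $G(z):=\ln\bigl(\Gamma(2-z)/\Gamma(2+z)\bigr)$, the Binet formula reduces the target sum to $(1-\gamma)F_m+\tfrac{1}{2\sqrt{5}}\bigl(\alpha^{m-1}G(\alpha)-\beta^{m-1}G(\beta)\bigr)$. To compute $G(\alpha)$, I would use $\Gamma(w+1)=w\Gamma(w)$ to write $\Gamma(2-\alpha)=\beta\,\Gamma(\beta)$ and $\Gamma(2+\alpha)=\alpha^3\Gamma(\alpha)$, then eliminate $\Gamma(\beta)$ via the reflection formula $\Gamma(\beta)\Gamma(\alpha)=\pi/\sin\pi\beta$ together with $\sin\pi\beta=\sin(\pi-\pi\alpha)=\sin\pi\alpha=\cos\tfrac{\sqrt{5}\pi}{2}$. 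Combining these with $\alpha\beta=-1$ (which makes $\beta/\alpha^{3}=-\alpha^{-4}$) I expect to land on
$$G(\alpha)=\ln\!\Bigl(-\pi\sec\tfrac{\sqrt{5}\pi}{2}\Bigr)-2\ln\Gamma(\alpha)-4\ln\alpha=:A.$$
The same reduction for $z=\beta$, now using $\Gamma(2-\beta)=\alpha\Gamma(\alpha)$, $\Gamma(2+\beta)=\beta^3\Gamma(\beta)$ and $\alpha/\beta^{3}=-\alpha^{4}$, should yield $G(\beta)=-A$.

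Inserting these evaluations into the bracket collapses it to $\tfrac{A}{2\sqrt5}(\alpha^{m-1}+\beta^{m-1})=\tfrac{L_{m-1}}{2\sqrt5}A$, which is exactly the Fibonacci claim. For the Lucas sum the parallel computation produces instead $\tfrac{A}{2}(\alpha^{m-1}-\beta^{m-1})=\tfrac{\sqrt5 F_{m-1}}{2}A$, giving the second identity. The main obstacle I anticipate is not algebraic but bookkeeping of signs inside the logarithm: $\Gamma(\beta)<0$ (since $\beta\in(-1,0)$) and $\cos(\sqrt5\pi/2)<0$ conspire so that the ratio $\Gamma(2-\alpha)/\Gamma(2+\alpha)$ is a positive real and its principal logarithm is real-valued, and it is precisely the minus sign from $\alpha/\beta^3=-\alpha^4$ that survives to produce the distinctive $-\pi\sec(\sqrt5\pi/2)$ inside the logarithm. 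Keeping that sign correct is the one place where the derivation requires care.
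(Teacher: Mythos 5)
Your proposal is correct and follows essentially the same route as the paper: the same Srivastava-type generating function $\sum_{k\ge1}(\zeta(2k+1)-1)\frac{z^{2k+1}}{2k+1}=(1-\gamma)z+\tfrac12\ln\frac{\Gamma(2-z)}{\Gamma(2+z)}$ (which the paper cites directly rather than re-deriving by integration), evaluated at $z=\alpha,\beta$, reduced via $\Gamma(w+1)=w\Gamma(w)$, $\alpha\beta=-1$ and the reflection formula with $\sin\pi\alpha=\sin\pi\beta=\cos\tfrac{\sqrt5\pi}{2}$. Your observation that $G(\beta)=-G(\alpha)$ is exactly the paper's remark that the two gamma ratios are reciprocals, so the two arguments coincide.
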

\begin{proof}
	The following generating function is stated in \cite[p.~280]{Srivastava}
	\begin{equation*}
	\sum_{k=1}^\infty (\zeta(2k+1)-1) \frac{z^{2k+1}}{2k+1} = (1-\gamma)z + \frac{1}{2}\ln\frac{\Gamma(2-z)}{\Gamma(2+z)}, \qquad |z|<2.
	\end{equation*}
	This relation yields straightforwardly for $m\geq0$
	\begin{align*}
	\sum_{k=1}^\infty (\zeta(2k+1)-1) \frac{F_{2k+m}}{2k+1}
	= (1-\gamma)F_{m} + \frac{1}{2\sqrt{5}}\Big (
	\alpha^{m-1} \ln\frac{\Gamma(\beta+1)}{\Gamma(\alpha^2+1)}- \beta^{m-1} \ln\frac{\Gamma(\alpha+1)}{\Gamma(\beta^2+1)}\Big ).
	\end{align*}
	The fundamental functional equation of the gamma function, $\Gamma(z+1)=z\Gamma(z)$, yields
	\begin{align*}
	\sum_{k=1}^\infty (\zeta(2k+1)-1) \frac{F_{2k+m}}{2k+1}= (1-\gamma)F_{m} + \frac{1}{2\sqrt{5}}\Big (
	\alpha^{m-1} \ln\frac{\beta \Gamma(\beta)}{\alpha^3 \Gamma(\alpha)}- &\beta^{m-1} \ln\frac{\alpha \Gamma(\alpha)}{\beta^3
		\Gamma(\beta)}\Big ).
	\end{align*}
	Since
	$\frac{\beta \Gamma(\beta)}{\alpha^3 \Gamma(\alpha)} = \Big (\frac{\alpha \Gamma(\alpha)}{\beta^3 \Gamma(\beta)}\Big)^{-1}$, 	we can write the last equation as follows
	\begin{equation*}
	\sum_{k=1}^\infty (\zeta(2k+1)-1) \frac{F_{2k+m}}{2k+1} = (1-\gamma)F_{m} + \frac{L_{m-1}}{2\sqrt{5}}\ln\Big
	(\frac{-\Gamma(\beta)}{\alpha^4 \Gamma(\alpha)}\Big ).
	\end{equation*}
	
	Finally, note that 
	\begin{equation*}
	\frac{\Gamma(\beta)}{\Gamma(\alpha)} = \frac{\pi}{\Gamma^2(\alpha) \sin\pi\alpha},
	\end{equation*}
	where we have used
	$\Gamma(z) \Gamma(1-z) = \frac{\pi}{\sin\pi z}$.
	This completes the first proof. The other one is omitted.
\end{proof}

The special evaluations for $m=0$ and $m=1$ are
\begin{equation*}
\sum_{k=1}^\infty (\zeta(2k+1)-1) \frac{F_{2k}}{2k+1} = \frac{1}{2\sqrt{5}}\Big (
-\ln\Big (\!-\pi\sec\frac{\sqrt{5}\pi}{2}\Big ) + 2\ln\Gamma(\alpha) + 4\ln\alpha\Big ),
\end{equation*}
\begin{equation*}
\sum_{k=1}^\infty (\zeta(2k+1)-1) \frac{L_{2k}}{2k+1} = 2(1-\gamma) + \frac{\sqrt{5}}{2}\Big (
\ln\Big (\!-\sec\frac{\sqrt{5}\pi}{2}\Big ) - 2\ln\Gamma(\alpha) - 4\ln\alpha\Big ),
\end{equation*}
\begin{equation*}
\sum_{k=1}^\infty (\zeta(2k+1)-1) \frac{F_{2k+1}}{2k+1} = 1-\gamma + \frac{1}{\sqrt{5}}\Big (
\ln\Big (\!-\sec\frac{\sqrt{5}\pi}{2}\Big ) - 2\ln\Gamma(\alpha) - 4\ln\alpha\Big ),
\end{equation*}
and the interesting identity
\begin{equation*}
\sum_{k=1}^\infty (\zeta(2k+1)-1) \frac{L_{2k+1}}{2k+1} = 1-\gamma.
\end{equation*}
\begin{theorem}\label{thm6}
	For $m\geq0$, we have
	\begin{align*} 
	\sum_{k=2}^\infty (\zeta(k)-1) \frac{F_{k+m-1}}{k} 
	\!=\! (1-\gamma)F_{m} - \frac{\ln\Gamma(\alpha)}{\sqrt{5}} L_{m-1} + F_{m-1}\ln\alpha 
	+ \frac{\alpha^{m-1}}{\sqrt{5}} \ln\Big(\!-\frac{\pi}{\alpha^2} \sec\frac{\sqrt{5}\pi}{2}\Big)
	\end{align*}
	and
	\begin{align*} 
	\sum_{k=2}^\infty (\zeta(k)-1) \frac{L_{k+m-1}}{k}\!=\!
	 (1-\gamma)L_{m} - \sqrt{5} \ln\Gamma(\alpha) F_{m-1} + L_{m-1}\ln\alpha 
	+ \alpha^{m-1} \ln\Big(\!-\!\frac{\pi}{\alpha^2} \sec\frac{\sqrt{5}\pi}{2}\Big)\!.
	\end{align*}
\end{theorem}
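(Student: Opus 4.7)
The plan is to parallel the proof of Theorem~\ref{thm3}, integrating the generating function used there to produce a generating function for $\sum_{k\ge 2}(\zeta(k)-1)z^k/k$, then feed in $z=\alpha$ and $z=\beta$.

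First I would integrate the identity
\begin{equation*}
\sum_{k=2}^\infty (\zeta(k)-1) z^{k-1} = 1-\gamma - \psi(2-z), \qquad |z|<2,
\end{equation*}
from $0$ to $z$. Since $\int_0^z\psi(2-t)\,dt = -\ln\Gamma(2-z)$ and $\Gamma(2)=1$, this gives the working generating function
\begin{equation*}
\sum_{k=2}^\infty (\zeta(k)-1) \frac{z^{k}}{k} = (1-\gamma)z + \ln\Gamma(2-z), \qquad |z|<2.
\end{equation*}
Since $|\alpha|, |\beta|<2$, substituting $z=\alpha$ and $z=\beta$ and combining via the Binet formula yields
\begin{equation*}
\sum_{k=2}^\infty (\zeta(k)-1) \frac{F_{k+m-1}}{k}
= (1-\gamma)F_{m} + \frac{\alpha^{m-1}\ln\Gamma(2-\alpha) - \beta^{m-1}\ln\Gamma(2-\beta)}{\sqrt{5}}.
\end{equation*}

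Next I would exploit the arithmetic of $\alpha$ and $\beta$: because $\alpha+\beta=1$ and $\alpha^2=\alpha+1$, $\beta^2=\beta+1$, we have $2-\alpha=1+\beta=\beta^2$ and $2-\beta=1+\alpha=\alpha^2$. Applying $\Gamma(1+z)=z\Gamma(z)$ turns the two gamma values into $\beta\Gamma(\beta)$ and $\alpha\Gamma(\alpha)$. The term $\ln\Gamma(\alpha^2)=\ln\alpha+\ln\Gamma(\alpha)$ is straightforward; the delicate one is $\ln\Gamma(\beta^2)=\ln\beta+\ln\Gamma(\beta)$, where both $\beta<0$ and $\Gamma(\beta)<0$. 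I would handle the signs by writing $\ln(\beta\Gamma(\beta)) = \ln(-\beta)+\ln(-\Gamma(\beta))=-\ln\alpha+\ln(-\Gamma(\beta))$ (using $-\beta=1/\alpha$), and then applying the reflection formula $\Gamma(\alpha)\Gamma(\beta)=\Gamma(\alpha)\Gamma(1-\alpha)=\pi/\sin\pi\alpha$ together with the identity $\sin\pi\alpha=\sin\pi\beta=\cos\frac{\sqrt{5}\pi}{2}$ (already used in the proof of Theorem~\ref{thm4}). This delivers
\begin{equation*}
\ln\Gamma(\beta^2) = -\ln\alpha - \ln\Gamma(\alpha) + \ln\!\Big(\!-\pi\sec\frac{\sqrt{5}\pi}{2}\Big).
\end{equation*}

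Substituting back, the $\ln\Gamma(\alpha)$ coefficients collect into $-(\alpha^{m-1}+\beta^{m-1})/\sqrt{5}=-L_{m-1}/\sqrt{5}$, the free $\ln\alpha$ terms combine to $-(\alpha^{m-1}+\beta^{m-1})\ln\alpha/\sqrt{5}$, and the remaining piece is the lone $\alpha^{m-1}\ln(-\pi\sec(\sqrt{5}\pi/2))/\sqrt{5}$. The stated form is then obtained by pulling $-2\alpha^{m-1}\ln\alpha/\sqrt{5}$ inside the $\alpha^{m-1}\ln(\cdot)/\sqrt{5}$ term (producing the factor $-\pi/\alpha^2\sec(\sqrt{5}\pi/2)$) and rewriting $-(\alpha^{m-1}+\beta^{m-1})\ln\alpha/\sqrt{5}+2\alpha^{m-1}\ln\alpha/\sqrt{5}=(\alpha^{m-1}-\beta^{m-1})\ln\alpha/\sqrt{5}=F_{m-1}\ln\alpha$. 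The Lucas statement follows by the same route, or directly from the Fibonacci identity via the relations in Remark~\ref{mainrem}.

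The main obstacle is bookkeeping rather than conceptual: one must keep track of the signs coming from $\beta<0$ and $\Gamma(\beta)<0$ (so the logarithms remain real), and then recognize the correct algebraic rearrangement that absorbs two stray $\alpha^{m-1}\ln\alpha$ terms into the argument of the logarithm so that the surviving coefficients assemble into the clean $F_{m-1}$ and $L_{m-1}$ appearing in the statement.
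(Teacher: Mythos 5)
Your proposal is correct and follows essentially the same route as the paper: the generating function $\sum_{k\ge 2}(\zeta(k)-1)z^k/k=(1-\gamma)z+\ln\Gamma(2-z)$ (which the paper simply cites from Srivastava--Choi rather than deriving by integration), substitution of $z=\alpha,\beta$ via Binet, the identities $2-\alpha=\beta^2$, $2-\beta=\alpha^2$, $\Gamma(1+z)=z\Gamma(z)$, and the reflection formula with $\sin\pi\alpha=\cos\frac{\sqrt{5}\pi}{2}$. Your sign bookkeeping for $\beta<0$, $\Gamma(\beta)<0$ and the final rearrangement into the $F_{m-1}$, $L_{m-1}$ coefficients correctly supply the "simplification" the paper leaves implicit.
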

\begin{proof}
	We can prove the statements using \cite[p.~280]{Srivastava}
	\begin{equation*}
	\sum_{k=2}^\infty (\zeta(k)-1) \frac{z^{k}}{k} = (1-\gamma)z + \ln \Gamma(2-z), \qquad |z|<2.
	\end{equation*}
	
	In case of Fibonacci numbers, we can derive
	\begin{equation*}
	\sum_{k=2}^\infty (\zeta(k)-1) \frac{F_{k+m}}{k} = (1-\gamma)F_{k+m} + \frac{1}{\sqrt{5}}\big (\alpha^m \ln (\beta \Gamma(\beta))
	-\beta^m \ln(\alpha\Gamma(\alpha))\big),
	\end{equation*}
	from which the statement is obtained by simplification. The Lucas series is obtained analogously.
\end{proof}

When $m=1$, then the special cases are
\begin{equation*}
\sum_{k=2}^\infty (\zeta(k)-1) \frac{F_{k}}{k} = 1-\gamma - \frac{2}{\sqrt{5}} \bigl(\Gamma(\alpha)+\ln\alpha\bigr)+\frac{1}{\sqrt{5}}\ln\Big (\!-\pi\sec\frac{\sqrt5\pi}{2}\Big),
\end{equation*}
\begin{equation*}
\sum_{k=2}^\infty (\zeta(k)-1) \frac{L_{k}}{k} = 1-\gamma + \ln\Big (\!-\pi\sec\frac{\sqrt5\pi}{2}\Big).
\end{equation*}

\section{Concluding remarks}

In this article we present new closed forms for some types of infinite series involving Fibonacci and Lucas numbers with the Riemann zeta function of integer arguments. To prove our results, we use Binet's formulas, generating functions and some known series evaluations.
Our next work is to establish series evaluations with other popular number sequences, such as Pell, Pell-Lucas, Jacobsthal, Jacobsthal-Lucas, Mersenne and balancing numbers.

\medskip

\noindent MSC2010: 11B39, 11M06, 40C15

\noindent Keywords: Fibonacci number, Lucas number, Riemann zeta function, digamma function.

\end{document}